\documentclass[12pt,reqno]{amsart}
\usepackage{geometry}                		
\usepackage{graphicx}				
\usepackage{amssymb}
\usepackage{epstopdf}
\usepackage{datetime}
\usepackage{currfile} 
\allowdisplaybreaks

\usepackage{color}

\newtheorem{thm}{Theorem}[section]

\newtheorem{lem}[thm]{Lemma}
\newtheorem{prop}[thm]{Proposition}
\theoremstyle{definition}

\theoremstyle{remark}
\newtheorem{rem}{Remark}[section]

\numberwithin{equation}{section}
						
\numberwithin{equation}{section}

\allowdisplaybreaks

\newcommand{\mM}{\mathcal{M}}

\newcommand{\bv}{\textbf{v}}

\begin{document}

\title[Equivalent Formulations of Lagrangian Optimal Transport]
{A Unified Variational Framework for Optimal Transport with Lagrangian Costs}
\author{Hailiang Liu}
\address{Iowa State University, Mathematics Department, Ames, IA 50011} \email{hliu@iastate.edu}
\subjclass[2000]{49K20, 49L20, 49Q20}
\keywords{Optimal transport, Lagrangian costs, variational formulations, Kantorovich duality, Hamilton-Jacobi equations, Variational Structure}
\begin{abstract} 
We investigate optimal transport costs induced by general Lagrangian action functionals. Extending the classical Monge–Kantorovich and Benamou–Brenier theories, we derive a unified variational framework that connects several equivalent formulations of the induced transport cost, including Lagrangian, Eulerian, convex optimization, Hamilton–Jacobi dual, and Hamiltonian flow formulations. Under standard convexity assumptions on the Lagrangian, we establish the equivalence of these formulations through variational arguments and convex duality. The resulting optimality system reveals a natural Hamiltonian structure on the Wasserstein space, providing a direct link between optimal transport, Hamiltonian dynamics, and optimal control. The proposed framework extends the classical quadratic-cost theory to general Lagrangian costs and offers a unified perspective for the analysis of variational structures generated by action functionals.

\end{abstract}

\maketitle
\tableofcontents

\section{Introduction} 
Optimal transport  seeks the most efficient way to move mass from one distribution to another according to a prescribed transportation cost.  
In the classical quadratic-cost setting, this cost is closely related to the squared Wasserstein distance, and the resulting Wasserstein distance provides a genuine metric structure on the space of probability measures.  More general transportation costs arise naturally from variational principles, optimal control, mechanics, and other applications, but the resulting optimal transport value need not itself define a metric.  Our goal is to establish a unified variational framework for transport costs induced by the least action principle and to characterize them through several  equivalent  Lagrangian and Eulerian formulations.
These equivalent formulations provide complementary perspectives on the induced transport cost and reveal a richer relationship between the Lagrangian and Eulerian viewpoints.

Owing to its deep connections with  analysis, geometry, partial differential equations, probability, and optimization, optimal transport has become a central topic in modern mathematics \cite{Vi03, Vi08}.   More recently, optimal transport has attracted significant attention in computer science and machine learning due to its applications in statistical
modeling and data-driven learning \cite{Cu13,Ar17}.  In particular, optimal transport provides powerful metrics for comparing probability distributions and optimal transport maps for minimizing transportation costs. These tools have found broad applications in computer vision, including image retrieval, image  interpolation, and semantic correspondence, as well as in machine learning tasks such as domain adaptation, generative modeling, and variational inference; see for example, \cite{Ru98, PC19, CFT17, Ar17, Fl18} and references therein. 
Among the various optimal transport metrics, the Wasserstein distance has emerged as a fundamental tool for measuring discrepancies between probability measures.  The development of efficient numerical methods for optimal transport remains an active area of research \cite{PC19, Cu13}, particularly for large-scale problems and generalized transportation costs. Existing approaches include entropic regularization and Sinkhorn-type algorithms, PDE-based and variational discretizations, and dual optimization methods. In particular, Jacobs and Léger \cite{JL20, JLL21} introduced a fast ``back-and-forth'' method for computing optimal transport maps for a broad class of strictly convex costs, including quadratic and $p$-power costs. Their method alternates gradient-ascent steps in the two Kantorovich potentials and exploits efficient $c$-transforms, leading to favorable computational complexity and enabling high-resolution computations. These developments highlight the importance of efficient numerical methods beyond the classical quadratic-cost setting.


Classical optimal transport can be viewed from two complementary perspectives. The first is the {\bf Monge-Kantorovich formulation}, which seeks  an optimal transport map  (or, more generally, an optimal transport plan) between two probability measures. The second is the {\bf dynamical formulation}, which 
interprets optimal transport as the problem of finding geodesics in the space of probability measures. This dynamical viewpoint was introduced by Benamou and Brenier
\cite{BB00},  who show that the squared 2-Wasserstein distance between two probability densities  $\rho_0$ and $\rho_1$ on $D= \mathbb{R}^d$  
can be characterized as the minimum kinetic energy
$$
\int_0^1\int_{D} |v(t, x)|^2 \rho(t, x)dxdt
$$
subject to the continuity equation 
$$
\partial_t \rho  + \nabla_x \cdot (\rho v)=0, \quad \rho(0, \cdot)=\rho_0,  \; \rho(1, \cdot) =\rho_1.
$$
The resulting probability-measure-valued curves play a fundamental role in the theory of gradient flows in Wasserstein spaces;  see, for example, 
 \cite{Ot01, Vi03, AGS05, Mc97} and the references therein.

The classical formulation of optimal transport, on the other hand, seeks an optimal transport plan from the set
$$
\Pi(\mu_0, \mu_1)=\{\gamma\in \mathcal{P}(D\times D): (\pi_x)_{\#}\gamma=\mu_0, (\pi_y)_{\#}\gamma=\mu_1\},
$$
that is,  the set of probability measures on the product space $D \times D$  with prescribed marginals 
 $\mu_0$  and $\mu_1$. Given a transportation cost $c(x,y)$, the Kantorovich optimal transport problem is to solve
$$
\inf_{\gamma \in \Pi(\mu_0, \mu_1)}   \int_{D\times D}   c(x, y)d \gamma (x, y).
$$
Recall that the Benamou-Brenier formulation  characterizes  the  squared 2-Wasserstein distance, which corresponds to the quadratic transportation 
cost  
$$
c(x, y)=|x-y|^2.
$$ 
More generally,  transportation costs of the form 
 \begin{align}\label{h}
 c(x, y)=h(x-y),
 \end{align}
have been extensively studied; see,  for example,  \cite{GM96}. Here  $h: D \to \mathbb{R}$  is assumed to be continuous,  convex, and  even function.
Among these costs, the quadratic choice, $h(z)=|z|^2$, is by far the most widely studied, as it gives rise to the Wasserstein-2 distance and the Benamou--Brenier dynamical formulation.  We refer to \cite{KS84, Br91,Ev89, GM96, RR98} for the foundational theory and its many developments.


In this paper,  we focus on transport costs  induced by a general  {\bf Lagrangian cost}. The classical dynamical formulation of optimal transport interprets the Wasserstein -2 distance through curves of probability measures with minimal kinetic action, which correspond to constant-speed Wasserstein geodesics. In the present general Lagrangian setting, we instead consider action-minimizing curves associated with the prescribed Lagrangian. These curves need not be geodesics of a Riemannian metric unless the Lagrangian is quadratic in the velocity.  To avoid technical issues associated with boundary  condition and 
compactness, we work on the flat torus  $D=\mathbb{T}^d$. 
Given a Lagrangian 
$$
L: D\times \mathbb{R}^d \to \mathbb{R}, 
$$
we define the pointwise transportation cost by the least-action principle
\begin{align}\label{cl+} 
c_L(x, y)=\inf_{\omega} \left\{  \int_0^1 L(\omega(t), \dot \omega(t))dt,\quad \omega(0) = x, \; \omega(1) = y\right\}, 
\end{align}
for which the minimum is taken on the set of curves $\omega \in  C^1([0,1], D)$.  The corresponding transport cost between two probability measures 
$\rho_0$ to $\rho_1$   is then defined by
$$
C_L(\rho_0, \rho_1)=\inf_{\gamma\in \Pi(\rho_0, \rho_1)} \int_{D \times D} c(x, y)d\gamma(x, y).
$$
For the quadratic Lagrangian, $L(v)=\frac{1}{2}|v|^2$, we have 
$$
C_L(\rho_0, \rho_1)=\frac{1}{2}W_2^2(\rho_0, \rho_1),
$$ 
so the distance is 
$$
W_2(\rho_0, \rho_1) =\sqrt{2C_L(\rho_0, \rho_1)}. 
$$
For $p$-power costs, similarly, the cost is related to $W_p^p$, not $W_p$. 

We next develop several equivalent formulations of this transport cost, beginning with its Eulerian dynamical formulation. 
$$
\inf_{\rho, v} \int_0^1\int_D L(x, v(t, x)) \rho(t, x) dx dt,
$$ 
subject to the continuity equation 
$$
\partial_t \rho  + \nabla_x \cdot(\rho v)=0, \quad \rho(0, \cdot)=\rho_0,  \; \rho(1, \cdot) =\rho_1.
$$  
When the Lagrangian is convex with respect to the velocity variable, introducing the momentum variable $m=\rho v$ transforms the problem into a convex optimization problem.
By convex duality,  this formulation is equivalent to a variational problem over a potential $\phi$,
 which serves as the Lagrange multiplier associated with the continuity constraint. Formally, the corresponding optimality conditions consist of the coupled system
\begin{align*}
& \partial_t \rho +\nabla_x \cdot(\rho \nabla_p H(x, \nabla_x \phi))=0,\\ 
& \partial_t \phi +H(x, \nabla_x  \phi)
=0,
\end{align*}
together with the prescribed initial and terminal conditions for $\rho$.

This optimality system closely resembles the mean-field games system introduced by Lasry and Lions \cite{LL07}. 
More importantly, it admits a natural Hamiltonian interpretation as a Wasserstein--Hamiltonian flow  \cite{CLZ20}  
\begin{align*}
\partial_t \rho =\frac{\delta}{\delta \phi}\mathcal H(\rho, \phi), \quad \partial_t \phi=- \frac{\delta}{\delta \rho} \mathcal H(\rho, \phi),
\end{align*} 
with Hamiltonian  
$$
\mathcal H(\rho, \phi)= \int_D H(x, \nabla_x \phi)  \rho dx. 
$$
This formulation reveals that optimal transport induced by a general Lagrangian cost possesses an intrinsic infinite-dimensional Hamiltonian structure on the Wasserstein space of probability measures. The density $\rho$  represents the mean-field distribution of an ensemble of particles, while and the potential $\phi$ 
serves as its conjugate momentum.  The evolution is therefore governed by Hamilton's equations on the space of probability measures, providing a unified geometric framework that connects optimal transport, mean-field games, Hamiltonian dynamics, and Wasserstein geometry.

Optimal transport with Lagrangian costs has been investigated previously in several application-driven settings;  see, for example,  \cite{BB07, FF10, FGY11}. 
Our emphasis, however, is on the {\bf Eulerian viewpoint}. We develop several equivalent characterizations of the induced transport cost, derive the associated PDE optimality systems, and clarify their connections with optimal transport plans and maps.

The remainder of the paper is organized as follows. In Section 2, we establish the existence and uniqueness of minimizing curves defining the pointwise cost $c(x,y)$ and investigate its basic properties.  Section 3 presents several equivalent formulations of the induced transport cost and establishes their equivalence. In Section 4, we illustrate the framework through several representative examples of local costs arising from Lagrangians. Finally, Section 5 concludes with further discussions and remarks.


\section{Lagrangian Costs and Optimal Transport}
The choice of the transportation cost plays a fundamental role in optimal transport. While the Kantorovich formulation admits a minimizer under very mild assumptions on the cost function, the existence of an optimal transport map in the original Monge formulation is considerably more delicate and depends strongly on the structure of the cost.
 
 Two classical examples are the {\bf linear cost}
 $$
 c(x, y)=|x-y|,
 $$
  which corresponds to Monge's original formulation, and the {\bf quadratic cost}  
  $$
  c(x, y) = |x-y|^2,
  $$
 which gives rise to the 2-Wasserstein distance. The quadratic cost enjoys remarkable geometric and analytical properties, including deep connections with the Monge--Ampère equation, incompressible Euler equations, gradient flows, and diffusion equations such as the heat equation; see \cite{JKO98}. 
In this work we consider the more general  class of {\bf Lagrangian costs} 
 \begin{align}\label{cl} 
c_L(x, y)=\inf_{\omega} \left\{ \int_0^1 L(\omega(t), \dot \omega(t))dt,\quad \omega(0) = x, \; \omega(1) = y\right\}, 
\end{align} 
where the minimum is taken over all continuously differentiable curves connecting $x$ and $y$.  Throughout the paper, we work with a general Lagrangian $L(x,v)$,  relying only on its structural properties rather than any explicit expression. 

To ensure that the pointwise cost $c(x,y)$ is well defined,  we assume that $L$  is a  {\bf Tonelli Lagrangian}; namely $L(x,\cdot)$ is strictly convex and superlinear, 
and the associated Euler–Lagrange flow is complete \cite{Fa19, Da08}.
Under these standard assumptions, the direct method of the calculus of variations guarantees the existence of minimizing curves, so that the cost $c(x,y)$ is well defined.

The corresponding Kantorovich optimal transport problem is given by 
$$
C_L(\mu_0, \mu_1)= \inf_{\gamma \in \Pi(\mu_0, \mu_1)} \int_{D \times D}c_L(x, y)d\gamma(x, y).
$$
A measurable map $T:D \to D$ is said to transport  $\mu_0$ to $\mu_1$ if 
$$
\mu_1=T_\# \mu_0, 
$$
that is 
$$
\int_D f(y)d\mu_1(y) = \int_D f(T(x))d\mu_0(x)
$$
for every integrable functions $f$. Such a map induces the transport plan
$$
\gamma=(id, T)_\# \mu_0,
$$ 
and solves the Monge problem 
$$
{\rm inf}_{\mu_1=T_\# \mu_0} M[T]:=\int_D c_L(x, T(x))d\mu_0(x).
$$
Unlike the Kantorovich problem, the Monge problem may fail to admit a solution, since an optimal transport map does not always exist. One of the motivations for introducing transport plans is precisely to overcome this lack of existence while retaining the essential geometric and variational structure of the transport problem; see \cite{Ka48}. 

\section{Equivalent Formulations of the  Lagrangian Transport Cost} 

In this section, we present several equivalent formulations of the transport cost  $C_L(\mu_0, \mu_1)$. 
Throughout the paper, we assume that the involved measures are  absolutely continuous with respect to the Lebesque measure,   
$$
d\mu_i(x)=\rho_idx, \quad i=0, 1, 
$$
and identify each measure with its density $\rho_i$. \\

{\bf A geometric formulation}\\

For general nonlinear Lagrangians, we work with the Lagrangian action rather than invoking a Riemannian metric structure. 
Given a density $\rho$, consider a tangent vector $s$ satisfying 
$$
s + \nabla \cdot(\rho v)=0. 
$$
The Lagrangian $L(x, v)$ induces the following action-based functional on tangent directions 
define  the variational structure induced by the Lagrangian $L$: 
\begin{align*}
\mathcal{G}_\rho(s)
:=
\inf_{\substack{v: \ s+\nabla\cdot(\rho v)=0}}
\int_D \rho(x)L(x,v(x))dx. 
\end{align*} 
Here, the infimum is taken over all admissible velocity fields $v$ that generate the tangent vector $x=s$ through the continuity equation on manifold 
$$
\mM =\left\{ \rho \in \mathcal{P}(D): \quad \rho >0, \quad  \int_D \rho(x)dx=1\right\}, 
 $$
where $\mathcal{P}(D)$ denotes the set of probability densities on $D$. 
When $L(x,v)$ is quadratic in $v$, for example
$$
L(x,v)=\frac12 v^\top A(x)^{-1}v,
$$
the functional $\mathcal{G}_\rho(s, s)$ defines a Riemannian metric tensor on $\mM$. In the classical case $A(x)=I$, this reduces to the standard Wasserstein-2 Riemannian structure.

The corresponding action-based transport cost is defined by 
\begin{equation}\label{gr}
I_1: =\inf_{\rho}  
\int_0^1 \mathcal{G}_\rho \left(\partial_t \rho \right)dt, 
\end{equation}
where the infimum is preformed over all sufficiently smooth curves 
$$
\rho: [0, 1] \to  \mM, \quad \rho(0)=\rho_0, \quad \rho(1)=\rho_1.
$$
In the special case where $L$ is quadratic in the velocity, $\mathcal{G}_\rho$ defines a Riemannian metric tensor, and the corresponding cost can be interpreted as the squared length of a geodesic in the induced Riemannian structure.

{\bf A Lagrangian formulation} \\
Next, we consider  the Lagrangian description. Let 
$$
X: [0, 1]\times D \to D
$$
be a family of particle trajectories. 
The transport cost is given by 
\begin{equation}\label{xs}
I_2: = \inf_{X} 
\int_0^1\int_D L(X(t, x), \partial_t X(t, x))\rho_0(x)dx dt, 
\end{equation}
where the infimum is preformed over all sufficiently smooth maps satisfying the push-forward constraint
$$
X(1, \cdot)_{\#}\rho_0=\rho_1. 
$$

{\bf An Eulerian  formulation} \\
We now turn to the Eulerian description. 
Let  $\rho(t, x) \geq 0$ and $v(t, x)\in \mathbb{R}^d$ denote the density and velocity fields, respectively. They satisfy  the continuity equation 
\begin{equation}\label{tr}
\partial_t \rho +\nabla_x \cdot(\rho v)=0
\end{equation}
together with  the initial and final conditions
\begin{equation}\label{r01}
\rho(0)=\rho_0, \quad \rho(1)=\rho_1.
\end{equation}
The corresponding optimization problem is
\begin{equation}\label{rv-}
I_3: = \inf_{\rho, v} \int_0^1 \int_{D} \rho(t, x)L(x,  v(t, x))dx dt.
\end{equation}
Although this formulation is natural, the objective is generally not jointly convex in the variables $(\rho, v)$. \\

\noindent{\bf A convex momentum formulation} \\
Following the idea of Benamou and Brenier  \cite{BB00}  for the quadratic cost, we introduce the momentum variable 
$$
m= \rho v. 
$$
The continuity equation then becomes 
$$
\partial_t \rho +\nabla_x \cdot(m)=0,
$$
which is linear in the unknowns $(\rho, m)$. The optimization problem can be rewritten as
i
\begin{equation}\label{rm}
I_4: = \inf_{\rho, m} \int_0^1 \int_{D} \rho(t, x)L(x,  m(t, x)/\rho(t, x))dx dt
\end{equation}
subject to the continuity equation and the endpoint constraints.
The functional is understood as $0$ for $(\rho, m)=(0, 0)$ and $\infty$ for other than $\rho=0$ and $ m\not=0$.   
This is the standard convex extension used in optimal transport.

This formulation is particularly attractive because, under the convexity of the Lagrangian with respect to the velocity variable, the objective becomes convex in the variables  $(\rho,m)$.
Consequently, powerful tools from convex analysis and duality become available.

{\bf A convex dual representation}\\
To derive the dual formulation, we first establish the following characterization of the convex integrand.
\begin{lem} \label{lem3.1}
Let $H(x, p)$ be the Legendre transform of $L(x, q)$ with 
$$
H(x, p)=\sup_{q\in \mathbb{R}^d}\{p\cdot q - L(x, q)\}.
$$
Define  
 $$
 Q(x)=\left\{(a, b)\in \mathbb{R}\times \mathbb{R}^d: \quad a+H(x, b)\leq 0 \right\} \quad \text{ for each}\quad x\in D.
 $$
Then,  for every $(\tau, y) \in \mathbb{R} \times \mathbb{R}^d$,  
$$
\sup_{(a, b)\in Q(x)}
 ( a\tau  +b\cdot y ) = g(\tau, y),
 $$
 where 
 $$
 g(\tau, y) =\left\{ 
\begin{array}{ll}
\tau L(x, y/\tau), & \tau >0,\\
0, & \tau=0, y=0,\\
+\infty, & \tau=0, y\not=0 \; \text{or}\; \tau <0.
\end{array}
\right.
$$
\end{lem}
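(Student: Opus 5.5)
For fixed $x$ I would compute the support function of the convex set $Q(x)$ directly, splitting into the cases listed in the definition of $g$. The only input is Fenchel--Moreau for $L(x,\cdot)$: it is convex, superlinear and finite, hence lower semicontinuous, so $L=H^*$, i.e. $L(x,q)=\sup_b\{b\cdot q-H(x,b)\}$. Superlinearity also makes $H(x,\cdot)$ finite on all of $\mathbb{R}^d$, so every $b$ is admissible. The main simplification is that, for fixed $b$, the constraint $a+H(x,b)\le 0$ only restricts $a$ from above, $a\le -H(x,b)$. The supremum over $a$ can therefore be taken first, and the sign of $\tau$ decides everything.

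\emph{Case $\tau>0$.} Since $a\tau$ increases in $a$, the optimal choice is $a=-H(x,b)$. Then
\[
\sup_{(a,b)\in Q(x)}(a\tau+b\cdot y)=\sup_b\{b\cdot y-\tau H(x,b)\}=\tau\sup_b\{b\cdot (y/\tau)-H(x,b)\}=\tau L(x,y/\tau),
\]
where the last equality is the biconjugate identity $L=H^*$.

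\emph{Case $\tau<0$.} Fix any $b$, for instance $b=0$, and let $a\to-\infty$. Then $a\tau\to+\infty$, so the supremum is $+\infty$.

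\emph{Case $\tau=0$.} The expression reduces to $\sup_b b\cdot y$, because for every $b$ some admissible $a$ exists, as $H(x,b)$ is finite. If $y=0$ this supremum is $0$. If $y\neq 0$, take $b=ky$ with $k\to\infty$ to get $+\infty$.

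The only step needing care is justifying $L=H^*$ and the finiteness of $H$, which is exactly where the Tonelli assumptions are used. I would state these briefly rather than reprove convex duality. Everything else is elementary case analysis.
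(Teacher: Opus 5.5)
Your proof is correct and follows the same case analysis as the paper: maximize in $a$ first, take $a=-H(x,b)$ when $\tau>0$, send $a\to-\infty$ when $\tau<0$, and reduce to $\sup_b b\cdot y$ when $\tau=0$. Your explicit use of Fenchel--Moreau for $L=H^*$ and of superlinearity for the finiteness of $H(x,\cdot)$ makes precise what the paper attributes loosely to ``the definition of the Legendre transform'', and it shows that pointwise finiteness of $H(x,0)$ suffices where the paper appeals to boundedness on $D$.
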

The proof follows directly from the Legendre transform and elementary convex duality. This lemma provides the key ingredient for deriving the dual Eulerian formulation presented in the next subsection. 
\begin{proof}  We consider three cases.  If $\tau>0$, then the constraint $a+H(x, b)\leq 0$ implies that the supermum is attained by taking  the largest admissible   value of $a$, namely 
take the maximal possible value of $a$ in the sup, and hence
$a=-H(x, b)$. Hence 
$$
\sup_{(a, b)\in Q(x)} (a\tau +b\cdot y) =
\sup_b [\left(-\tau H(x, b)+b\cdot y \right)=\tau \sup_b 
\left(b\cdot y/\tau  - H(x, b) \right)]=\tau L(x, y/\tau),
$$
where the last equality follows from the definition of the Legendre transform.  

If $(\tau, y)=(0, 0)$, then $a\tau +b\cdot y=0$ for every admissible pair  $(a, b)$, so the supremum is equal to zero.  

Next, suppose  $\tau=0$ and $y\not=0$.  Since $a$ does not contribute to the objective, for any $b$ 
one can choose $a$ sufficiently negative so that $a+H(x, b)\leq 0$.  Therefore,  
$$
\sup_{(a, b)\in Q(x)} \{a\tau+b\cdot y\}=\sup_b  b\cdot y=+\infty.
$$
Finally, if $\tau <0$,  choose $b=0$. Since $H(x,0)$ is bounded on the compact domain $D$, any sufficiently negative value of $a$ satisfies $a+H(x, 0)< 0 $. 
Because $\tau <0$, $a\tau \to \infty$ as $a\to -\infty$, which yields 
$$
\sup_{(a, b)\in Q(x)} \{a\tau+b\cdot y\}=\sup_a  a\tau = +\infty.
$$
This completes the proof. 
\end{proof}

The convex duality developed naturally leads to  the following variational formulation: 
 $$
 I_5:= \sup_{\phi} \int_{D } (\phi(1, x)\rho_1(x)-\phi(0, x)\rho_0(x))dx,
$$
subject to the Hamilton-Jacobi inequality  
$$
 \partial_t \phi + H(x, \nabla_x \phi) \leq 0. 
$$
We now show that all of the preceding formulations  are equivalent.

\begin{thm}\label{thm1}
Let $\rho_0, \rho_1 \in \mathcal P(D)$ be smooth probability densities.   Then the Lagrangian transport cost satisfies
\begin{equation}\label{rv}
C_L(\rho_0, \rho_1)=I_2 = I_3, 
\end{equation}
and, furthermore,  
$$
C_L(\rho_0, \rho_1)=I_4=I_5.
$$
\end{thm}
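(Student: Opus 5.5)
We prove the equalities as a cycle of inequalities, $C_L\le I_2\le I_3=I_4\le I_5\le C_L$, plus the reverse inequality $C_L\ge I_2$, at the formal level of smoothness the paper works at throughout.

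\emph{Step 1: $C_L = I_2$.}
For any admissible $X$, the map $x\mapsto (X(0,x),X(1,x))$ pushes $\rho_0$ forward to a plan $\gamma\in\Pi(\rho_0,\rho_1)$. Here we take $X(0,\cdot)=\mathrm{id}$, which is implicit in the Lagrangian formulation. For each $x$ the curve $t\mapsto X(t,x)$ is admissible in \eqref{cl}. Integrating against $\rho_0$ therefore gives
\[
\int c_L\,d\gamma\le \int_0^1\int_D L(X,\partial_tX)\rho_0\,dx\,dt,
\]
hence $C_L\le I_2$.

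For the reverse inequality, let $\gamma$ be an optimal plan and choose a minimizing curve $\omega_{x,y}$ for each pair $(x,y)$, selected measurably. Since $\rho_0$ is absolutely continuous and $L$ is Tonelli, we expect the optimal plan to be induced by a map $T$: the twist condition for $c_L$ gives a Brenier/Gangbo--McCann type theorem. Setting $X(t,x)=\omega_{x,T(x)}(t)$ then attains $C_L$, so $I_2\le C_L$. If one wants to avoid the map, one can instead approximate $\gamma$ by plans induced by maps.

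\emph{Step 2: $I_2=I_3$.}
Given a smooth flow $X$ that is a diffeomorphism for each $t$, set $\rho(t)=X(t)_\#\rho_0$ and $v(t,X(t,x))=\partial_tX(t,x)$. Then $(\rho,v)$ solves \eqref{tr}, and changing variables shows the two actions coincide, so $I_3\le I_2$. Conversely, given a smooth pair $(\rho,v)$, solve the ODE $\dot X=v(t,X)$ with $X(0,x)=x$. The continuity equation gives $X(t)_\#\rho_0=\rho(t)$, and the same change of variables gives $I_2\le I_3$.

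\emph{Step 3: $I_3=I_4$.}
The substitution $m=\rho v$ is a bijection where $\rho>0$. The convention for $g$ from Lemma \ref{lem3.1} makes pairs with $\rho=0$, $m\neq0$ infinitely costly, so such pairs never improve the infimum. Hence $I_3=I_4$.

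\emph{Step 4: $I_4=I_5$ (the main obstacle).}
Lemma \ref{lem3.1} lets us write the objective as
\[
\sup_{(a,b)\in Q(x)}\iint (a\rho+b\cdot m)\,dx\,dt .
\]
Introduce $\phi$ as a Lagrange multiplier for the continuity constraint and integrate by parts:
\[
I_4=\inf_{\rho,m}\sup_{\phi,(a,b)}\Big[\iint(a\rho+b\cdot m)\,dx\,dt-\iint(\partial_t\phi\,\rho+\nabla\phi\cdot m)\,dx\,dt+\int\phi(1)\rho_1-\int\phi(0)\rho_0\Big].
\]
Exchanging inf and sup, the infimum over $(\rho,m)$ is $-\infty$ unless $a=\partial_t\phi$ and $b=\nabla\phi$. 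The constraint $(a,b)\in Q(x)$ then becomes $\partial_t\phi+H(x,\nabla\phi)\le0$, and the value is exactly the functional defining $I_5$.

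Weak duality $I_5\le I_4$ is elementary, because $\rho\ge0$ and $H$ is the Legendre transform of $L$ (Fenchel–Young). The genuine difficulty is justifying the min–max exchange, i.e.\ proving there is no duality gap. I would first try Fenchel–Rockafellar duality on $C([0,1]\times D)$ with its dual space of measures, as in Benamou–Brenier. The needed qualification is continuity of the convex functional $(a,b)\mapsto\iota_{Q}$-penalty at a point where it is finite, for example at $\phi=-Mt$ with $M>\max H(x,0)$; compactness of $\mathbb T^d$ supplies this. The measure-valued minimizer is then identified with a density pair by convexity of $g$.

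As a cross-check, or as an alternative route to $I_5\ge C_L$, I would use the Hopf–Lax/Lax–Oleinik formula
\[
\phi(1,y)=\inf_x\{\phi(0,x)+c_L(x,y)\}.
\]
This links any pair of Kantorovich potentials $(\psi,\psi^{c_L})$ to a viscosity subsolution of the Hamilton–Jacobi inequality with the same dual value. Kantorovich duality for the continuous cost $c_L$ on the compact torus then gives $I_5\ge C_L$. This closes the cycle: $C_L\le I_2=I_3=I_4$ and $I_4=I_5$.
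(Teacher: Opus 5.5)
Your lower bounds are fine at the paper's level of rigor: $C_L\le I_2\le I_3=I_4$, weak duality $I_5\le I_4$, and the Lax--Oleinik argument for $I_5\ge C_L$ (with $I_5$ taken over Lipschitz/viscosity subsolutions, as in the paper's remark). That last step improves on the paper. There, $I_5\ge C_L$ rests on the formal identity \eqref{JK}, and the Kantorovich paragraph ending Step 4 actually establishes $K(\phi)\le C_L$, which is the opposite direction.

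What your proposal lacks is a valid upper bound $I_4\le C_L$. The link $I_5\le C_L$ in your opening chain is never argued. Everything therefore rests on combining $I_2\le C_L$ (your Step 1) with $I_3\le I_2$ (your Step 2), and these do not connect. Step 2 produces an Eulerian pair only when every $X(t,\cdot)$ is a diffeomorphism, so that $v(t,X(t,x))=\partial_tX(t,x)$ is single-valued. The flow $X(t,x)=\omega_{x,T(x)}(t)$ built in Step 1 is merely Borel in $x$. Nothing you say prevents two minimizing curves from passing through the same point at the same time with different velocities. Excluding this requires Mather's non-crossing property (as in Bernard--Buffoni). ``Approximating $\gamma$ by plans induced by maps'' addresses neither smoothness nor injectivity.

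The missing idea is that convexity makes injectivity irrelevant, and this is exactly the paper's Step 4. Take an optimal plan $\pi$ (no map needed) with minimizing curves $X(t;x,y)$, and push forward both mass and momentum: $\rho_t=(X_t)_\#\pi$ and $m_t=(X_t)_\#(\partial_tX\,\pi)$. This pair satisfies the continuity equation. For every $(\sigma,\xi)$ with $\sigma+H(x,\xi)\le 0$, Fenchel--Young along each curve gives $\iint(\sigma\rho+\xi\cdot m)\,dx\,dt\le\int\!\int_0^1 L(X,\partial_tX)\,dt\,d\pi=C_L$. Taking the supremum via Lemma~\ref{lem3.1} yields $J(\rho,m)\le C_L$. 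Crossings are harmless, since this is just Jensen's inequality for the perspective $\rho L(x,m/\rho)$.

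Alternatively, keep your Fenchel--Rockafellar route and close the announced cycle with the elementary bound $I_5\le C_L$. For a subsolution $\phi$ and any curve $\gamma$ from $x$ to $y$, $\int_0^1L(\gamma,\dot\gamma)\,dt\ge\int_0^1(\partial_t\phi+\nabla\phi\cdot\dot\gamma)\,dt=\phi(1,y)-\phi(0,x)$, so $K(\phi)\le C_L$. On that route, passing from the measure-valued Fenchel--Rockafellar minimizer to density pairs needs a regularization argument, not merely convexity of $g$.
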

  \begin{proof} 
The proof proceeds in four steps: \\
{\bf 1.} ``Establish the inequality $I_3\geq I_2$, linking the Eulerian and Lagrangian formulations"; \\
{\bf 2.}  ``Reformulate the Eulerian problem $I_3$  as the convex optimization problem $I_4$; \\
{\bf 3.}   ``Derive the dual problem $I_5$  via Fenchel--Rockafellar duality";\\ 
{\bf 4.}  ``Show that $I_5$ coincides with the Kantorovich dual formulation, yielding  completing the equivalence of all variational characterizations." \\ 
 
\noindent{\bf Step 1.} \\
Let $(\rho, v)$ be an admissible pair  for Eulerian formulation (\ref{rv}) with $(\rho v)$ satisfying the continuity equation (\ref{tr}) together with the endpoint conditions 
 (\ref{r01}). 
Let $X(t, x)$ denote the flow generated by the velocity field $v$, namely, 
\begin{equation}\label{xv}
X(0, x)=x, \quad \partial_t X(t, x)=v(t, X(t, x)),
\end{equation}
for all $(t, x)\in [0, 1]\times D$.  
Since $(\rho,v)$ satisfies the continuity equation, the flow transports the initial density to the density at time $t$; that is,
$$
\rho(t) = X(t, \cdot)_{\#}\rho_0.
$$
Using the change-of-variables formula associated with the push-forward, we obtain
\begin{align*}
\int_0^1 \int_{D} \rho(t, x)L(x, v(t, x))dx dt  & = \int_0^1 \int_{D} \rho_0(x) L(X(t, x), v(t, X(t, x))dxdt \\
& = \int_0^1 \int_{D} \rho_0(x) L(X(t, x), \partial_t X(t, x))dxdt.
\end{align*}
By the definition of the Lagrangian cost,
$$
c_L(x, X(1, x))d \leq \int_0^1 L(X(t, x), \partial_t X(t, x))dt,
$$
and therefore 
$$
\int_0^1 \int_{D} \rho(t, x)L(x, v(t, x))dx dt  \geq  \int_{D} \rho_0(x)c_L(x, X(1, x))dx. 
$$
Since the transport map $X(1,\cdot)$ pushes $\rho_0$  forward to $\rho_1$ , 
the induced transport plan is admissible for the Kantorovich problem. Hence
$$
\int_{D} \rho_0(x)c_L(x, X(1, x))dx \geq C_L(\rho_0, \rho_1). 
$$
This proves that every admissible Eulerian pair has cost no smaller than the Lagrangian optimal transport cost, establishing the first implication. \\

\noindent{\bf  Step 2.} \\  
Since the functional is not convex in $(\rho, v)$, we introduce the moment variable   
$$
m=\rho v, 
$$
and define a convex functional 
\begin{align} \label{grm}
J(\rho, m)=\sup_{(\sigma, \xi)\in Q(x)} \int_{[0\; 1]\times D } ( \sigma \rho(t, x) +\xi \cdot m(t, x))dxdt.
\end{align}
By Lemma \ref{lem3.1}, 
\begin{align}\label{gl}
J(\rho, m) =  \int_{D}\int_0^1  L(x, m/\rho)\rho dt dx, 
\end{align}
which is the convex extension of the original action. 

In fact, 
\begin{align*} 
& \int_{D}\int_0^1 (\rho(t, x)\sigma +m\cdot \xi)dt dx \\
&   \qquad \leq \int_{D}\int_0^1 (- \rho(t, x) H(x, \xi) +m\cdot \xi) dt dx \\
&  \qquad =\int_{D}\int_0^1 \rho(t, x)( -H(x, \xi) + \frac{m}{\rho} \cdot \xi)  dtdx  \\
&    \qquad = \int_{D}\int_0^1 \rho(t, x)L(x, m/\rho)dtdx- \int_{D}\int_0^1 \rho(t, x)d_H(m/\rho, \xi)dtdx,  
\end{align*}
where 
$$
d_H(v, \xi)=L(x, v)+H(x, \xi)-v\cdot \xi. 
$$
By the Legendre transform we know that $d_H(v, \xi)\geq 0$, and $d_H(v, \xi)=0$ if any only if 
$$
\xi=\nabla_v L(x, v).
$$
On the other hand, the right hand side can indeed be achieved by taking 
$$
\xi=\nabla_v L(x, m/\rho), \quad \sigma=-H(x,\xi).
$$
This proves our claim (\ref{gl}).  This when combined with Step 1 gives 
$$
I_4= \inf_{\rho, m} J \geq C_L(\rho_0, \rho_1).
$$
\noindent{\bf  Step 3.} \\ 
Introducing a Lagrangian multiplier $\phi$ for the continuity equation $\partial_t \rho +\nabla_x \cdot m=0$,
and integrating by parts in $t$  and $x$, we obtain the Lagrangian
\begin{align*}
\mathcal L(\rho, m, \phi)
 & =J(\rho, m) +\int_0^1\int_D \phi (\partial_t \rho +\nabla_x \cdot m)dxdt \\ 
 & =  \int_{[0, 1]\times D } 
(L(m/\rho) -  m/\rho \cdot  \nabla_x \phi(t, x) -  \partial_t \phi(t,x)) \rho(t,x)dxdt \\
& \qquad  + \int_D (\phi(1, x)\rho_1(x)-\phi(0, x)\rho_0(x))dx.
\end{align*}
Note that 
\begin{align*} 
\inf_{\rho, m} J(\rho, m)  & = \inf_{\rho, m} \sup_\phi  \mathcal L(\rho, m, \phi) \\
&  \geq  \sup_\phi   \inf_{\rho, m}  \mathcal L(\rho, m, \phi) \\
&  = \sup_\phi \inf_\rho 
\left\{  \int_D (\phi(1, x)\rho_1(x)-\phi(0, x)\rho_0(x))dx  \right. \\
& \qquad \left.  - \int_{[0, 1]\times D } \rho ( \partial_t \phi + H(x, \nabla_x \phi) dx dt 
\right\} \\
& =\sup_\phi K(\phi),
\end{align*}

 where 
 $$
 K(\phi):=\left\{\int_{D } (\phi(1, x)\rho_1(x)-\phi(0, x)\rho_0(x))dx, \; \partial_t \phi + H(x, \nabla_x \phi) \leq 0
 \right\}.
$$
When sufficiently smooth primal and dual optimizers exist, the equality can also be verified directly.  
Let $(\rho, \phi)$ be a smooth solution to the following  
\begin{align*}
& \partial_t \rho +\nabla_x \cdot (\rho \nabla_p H(x, \nabla_x \phi))=0,  \\
&  \partial_t \phi(t, x) + H(x, \nabla_x \phi) = 0, \\
&  \rho(0,x)=\rho_0(x), \rho(1,x)=\rho_1(x),
\end{align*}
then equality holds in the Fenchel inequality.  The above formal optimality system should not be interpreted as an independent proof of strong duality; rigorous strong duality follows from the Fenchel–Rockafellar framework under the stated functional-analytic assumptions.

Combining this identity with the continuity equation gives
\begin{align*}
J(\rho, m) =& \int_0^1\int_{D } L(x, \frac{m}{\rho})\rho dx dt \\
= & \int_0^1\int_{D } \left( 
\nabla_x \phi \cdot \nabla_p H(x, \nabla_x \phi) -H(x, \nabla_x \phi(t, x))\right)\rho dx dt \\
=& \int_0^1\int_{D } \left( 
- \phi \nabla_x \cdot(\rho \nabla_p H(x, \nabla_x \phi))  + \rho \partial_t \phi(t, x)\right) dx dt\\
=& \int_0^1\int_{D }(\phi \partial_t \rho + \rho \partial_t \phi)dxdt  \\
=& \int_0^1\int_{D } \partial_t \left(\phi(t, x)\rho \right) dx dt \\
=& \int_{D } (\phi(1,x) \rho_1(x) -\rho(0,x)\rho_0(x))dx. 
\end{align*}
Thus, in the smooth setting, the primal and dual formulations attain the same value: 
\begin{equation}\label{JK}
\inf_{\rho, m} J=\sup_{\phi}K(\phi).
\end{equation}
Under suitable functional-analytic assumptions, strong duality follows from the Fenchel–Rockafellar duality theorem.

\noindent{\bf  Step 4.}  \\
Finally, we show that  
 $$ \inf_{\rho, m}  J = C_L(\rho_0, \rho_1).
 $$    
Let $X(t;  x, y), t\in [0, 1]$, denote a minimizing trajectory joining  $x$ to $y$ so that  
$$
c_L(x, y)=\int_0^1 L(X (t; x, y), \partial_t X (t; x, y))dt
$$
If $\pi \in \Pi(\rho_0, \rho_1)$  is an optimal transport plan, then    
$$
C_L(\rho_0, \rho_1)=\int_{D \times D} c(x, y)d\pi(x, y).
$$
The path family $X(t;x,y)$ induces the space-time density and momentum
\begin{align*}
& \rho(t, x)=\int_{D\times D} \delta(x-X(t;x, y))d\pi(x, y), \\
& m(t, x)=\int_{D\times D} \partial_t X (t; x, y) \delta(x-X(t;x, y))d\pi(x, y).
\end{align*}
One readily verify that $(\rho, m)$ satisfies the continuity equation and is therefore admissible for the functional $J$.   
Let $(\sigma, \xi)$ satisfy  
$$
 p: =\sigma+H(X, \xi) \leq 0.
 $$  
 Using the Legendre identity we have 
 $$
 L(X, v)=v\cdot \xi -H(X, \xi)+D_H(v, \xi),
 $$
 where 
 $$
 D_H(v, \xi) =  L(X, v) -v\cdot \xi +H(X, \xi) -v\cdot \xi \geq 0,
 $$ 
we obtain 
\begin{align*}
C_L(\rho_0, \rho_1) &=\int_0^1 \int_{D\times D} L(X, \partial_t X) d\pi dt\\
 & = \int_0^1\int_{D \times D} (\partial_t X \cdot \xi - H(X, \xi) + d_H(\partial_t X, \xi) d\pi dt \\
 & =  \int_0^1 \int_{ D \times D} \left( \sigma(X) + \partial_s X \cdot \xi(X) \right) d\pi ds \\
 & \quad +   \int_0^1\int_{ D \times D} \left(d_H(\partial_t X, \xi) - p \right)d\pi dt,
 \end{align*}
 where $p=\sigma+H(X, \xi)\leq 0$. Since $D_H \geq 0$ and $-p\geq 0$, it follows that  
$$
 C_L(\rho_0, \rho_1)  \geq \int_0^1\int_D \sigma \rho +\xi\cdot m dx ds. 
$$
Taking the supremum over all admissible pairs $(\sigma, \xi)$ yields, 
$$
C_L(\rho_0, \rho_1) \geq J(\rho, m) \geq  \inf_{\rho, m} J(\rho, m).
$$
Combined with the saddle-point identity established earlier, 
$$
C_L(\rho_0, \rho_1)=\sup_{\phi} K(\phi).
$$
To prove the reverse inequality, we invoke the Kantorovich duality formula
$$
C_L(\rho_0, \rho_1)= \sup_{\phi_0, \phi_1} \left\{ \int_{D } (\phi_1 d\rho_1- \phi_0 d\rho_0) \right\},
$$ 
where  the admissible Kantorovich potentials  $\phi_1$ and $\phi_0$ are continuous functions on $D $ satisfying 
$$
\phi_1(y)-\phi_0(x) \leq c_L(x, y), \; \forall x, y \in D .
$$
Suppose now that $\phi(s, x)$ is continuously differentiable and satisfies 
$$
\partial_t \phi +H(x, \nabla_x \phi)\leq 0.
$$
For any $C^1$ curve $\gamma(t): [0, 1] \to D $ connecting $x$ and $y$,  
\begin{align*}
 \int_0^1 L(\gamma, \dot \gamma)dt
 & \geq \int_0^1  \nabla_x\phi(t, \gamma(t))\cdot \dot \gamma(t) -H(\gamma(t), \nabla_x \phi(t, \gamma(t)) )dt\\
& \geq \int_0^1(\nabla_x\phi(t, \gamma(t))\cdot \dot \gamma(t) + \partial_t \phi(t, \gamma(t)))dt\\
& =\phi(1, \gamma(1)) -\phi(0, \gamma(0)).
\end{align*}
Taking the infimum over all connecting curves gives
$$
\phi(1, y)-\phi(0, x) \leq c_L(x, y),
$$
so that $(\phi(0, x), \phi(1, y))$ is an admissible Kantorovich pair. Consequently,  
$$
C_L(\rho_0, \rho_1) \leq \sup_\phi K(\phi).
$$
Combining the two inequalities yields 
$$
C_L(\rho_0, \rho_1) = \inf_{\rho, m} J = \sup_\phi K(\phi).
$$
\end{proof}

We emphasize that the argument in Step 3 is formal rather than a complete functional-analytic proof of strong duality. 
To avoid lengthen technical details, we state the following result under explicit assumptions.

\begin{prop}
Assume that $L(x,\cdot)$ is proper, lower semicontinuous, convex, and superlinear, with the perspective functional $(\rho, m) \to \rho L(x, m/\rho)$ lower semicontinuous on the space of finite measures. Assume further that the endpoint densities admit a finite-cost admissible interpolation satisfying the constraint qualification of the Fenchel–Rockafellar theorem. Then the dynamic Lagrangian transport problem admits the dual representation
$$
\inf_{\rho, m} J=\sup_{\phi}K(\phi).
$$
In particular, there is no duality gap.
\end{prop}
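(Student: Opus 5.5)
I would prove the proposition by casting the dynamic problem in the abstract Fenchel--Rockafellar form $\inf_u \{F(u)+G(\Lambda u)\}$ and reading off the dual. On $[0,1]\times D$, which is compact because $D=\mathbb{T}^d$, take the Banach space $E=C([0,1]\times D)\times C([0,1]\times D;\mathbb{R}^d)\ni(\sigma,\xi)$, whose dual is the space of finite measures $(\rho,m)$. Let $Z=C^1([0,1]\times D)\ni\phi$ and define the bounded linear operator $\Lambda\phi=(\partial_t\phi,\nabla_x\phi)$. The primal problem will be obtained as the dual of the problem
$$
\inf_{\phi\in Z}\Big\{ F(\phi)+G(\Lambda\phi)\Big\},\qquad F(\phi)=\int_D\big(\phi(0,x)\rho_0-\phi(1,x)\rho_1\big)dx,\qquad G(\sigma,\xi)=\begin{cases}0,&(\sigma,\xi)\in Q(x)\ \forall (t,x),\\ +\infty,&\text{otherwise.}\end{cases}
$$
Here $-\inf\{F+G\circ\Lambda\}=\sup_\phi K(\phi)$ by the definition of $K$. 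Both $F$ and $G$ are convex, and $G$ is the indicator of the closed convex set $\{(\sigma,\xi):\sigma+H(x,\xi)\le 0\}$. Closedness holds because $H$ is continuous, being finite by superlinearity of $L$.

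The second step computes the conjugates. For $G^*$ at a measure $(\rho,m)$, I apply Lemma \ref{lem3.1} pointwise and use the standard Rockafellar result on conjugates of integral functionals of measures. Together with the assumed lower semicontinuity of the perspective functional, this gives $G^*(\rho,m)=J(\rho,m)$: the perspective integral on the absolutely continuous part, with the recession function, which is $+\infty$ by superlinearity, on the singular part. For $F^*(-\Lambda^*(\rho,m))$ I compute
$$
\sup_\phi\Big\{-\int\!\!\int(\rho\,\partial_t\phi+m\cdot\nabla_x\phi)-\int_D\phi(0)\rho_0+\int_D\phi(1)\rho_1\Big\}.
$$
This equals $0$ exactly when $\partial_t\rho+\nabla_x\cdot m=0$ holds weakly with $\rho(0)=\rho_0$ and $\rho(1)=\rho_1$, and equals $+\infty$ otherwise. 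It is just the weak formulation of the constraint, obtained by the integration by parts used in Step 3 of Theorem \ref{thm1}.

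The third step verifies the constraint qualification. Fenchel--Rockafellar requires a point $\phi_0\in Z$ with $F(\phi_0)<\infty$ at which $G$ is continuous at $\Lambda\phi_0$. I take $\phi_0(t,x)=-Mt$ with $M>\max_x H(x,0)+1$. Then $\Lambda\phi_0=(-M,0)$ lies in the interior of $Q$ uniformly in $x$, so $G$ vanishes on a sup-norm ball around it and is continuous there. The theorem then gives
$$
\inf_\phi\{F+G\circ\Lambda\}=\max_{(\rho,m)}\{-F^*(-\Lambda^*(\rho,m))-G^*(\rho,m)\}=-\min J,
$$
that is, $\sup K=\min J$ with attainment on the primal side. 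The hypothesis of a finite-cost admissible interpolation ensures both sides are finite and not $+\infty=+\infty$. This hypothesis is in fact supplied by Step 4 of Theorem \ref{thm1}, via displacement along minimizing curves.

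I expect the main obstacle to be the identification $G^*=J$ on measures. The pointwise sup of Lemma \ref{lem3.1} must be interchanged with integration over continuous test pairs $(\sigma,\xi)$. My plan is to approximate a measurable selection $\xi=\nabla_vL(x,m/\rho)$, truncated, by continuous functions using Lusin's theorem, giving the $\ge$ direction. The $\le$ direction is immediate from Lemma \ref{lem3.1}. Singular parts of $m$ with respect to $\rho$ are handled by letting $|\xi|\to\infty$ on their support, which forces the value $+\infty$. This is precisely where the assumed lower semicontinuity of the perspective functional is used.
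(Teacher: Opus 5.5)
Your argument is correct, and it is the Fenchel--Rockafellar route the paper has in mind. The difference is that the paper never carries it out: beyond the formal inf--sup computation in Step 3 of Theorem \ref{thm1}, it simply assumes that some admissible interpolation satisfies the constraint qualification. You instead put the Hamilton--Jacobi problem on the primal side. The qualification is then verified explicitly at $\phi_0=-Mt$, and the measure problem appears as the dual, with attainment of $\min J$ for free.

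This orientation makes the proposition's technical hypotheses largely redundant:

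\begin{itemize}
\item The qualification holds automatically on $\mathbb{T}^d$ once $H$ is jointly continuous in $(x,p)$. That continuity comes from the standing Tonelli assumptions, not from the stated hypotheses on $L(x,\cdot)$ for fixed $x$. You also need it to make $-H(x,\tilde\xi)$ continuous in your Lusin step.
\item The finite-cost interpolation only excludes the degenerate case $\sup K=+\infty$.
\item Lower semicontinuity of the perspective functional is an output, not an input, since $G^*$ is a supremum of weak-$*$ continuous linear functionals.
\end{itemize}

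In particular, your Lusin argument never uses the lower semicontinuity hypothesis, contrary to your last sentence. It would be used only if you identified $G^*=J$ by Fenchel--Moreau instead, writing $J=J^{**}$ and computing $J^*$ by testing on Dirac masses.

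One wording slip needs fixing. The perspective $g$ is positively $1$-homogeneous and hence its own recession function. So the value $+\infty$ comes from $\rho<0$ or from the part of $m$ singular with respect to $\rho$. It does not come from the part of $(\rho,m)$ singular with respect to Lebesgue measure, which can carry finite cost (for example, a Dirac mass moving along a minimizing curve).
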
 

\begin{rem}
(1) When sufficiently smooth optimizers exist, equality in the Fenchel inequality yields 
$$
m/\rho=\nabla_p  H(x,\nabla_x \phi), 
 $$
and the dual constraint is saturated, giving the associated Hamilton–Jacobi/continuity optimality system. The existence of such smooth optimizers is a separate regularity question and is not required for the duality result. \\
(2) The dynamic optimal transport  literature routinely formulates the dual in terms of Lipschitz/viscosity Hamilton–Jacobi subsolutions rather than assuming classical $C^1$ potentials \cite{IL24}. General convex-analytic treatments of nonlinear optimal transport also emphasize the measure-theoretic/functional-analytic formulation and optimality conditions \cite{PP19}. 
\end{rem}

\noindent{\bf Definition of $I_6$.} \\
 Under the assumption of $L(x, \cdot)$ is strongly convex, we may introduce the equivalent Hamiltonian formulation 
  \begin{align*}
 I_6 := \inf_{\rho, \phi}  \int_0^1\int_{D} L(x, \nabla_p H(x, \nabla_x \phi))\rho dx dt, 
  \end{align*}
  subject to the continuity equation 
 $$
 \partial_t \rho +\nabla_x (\rho \nabla_p H(x, \nabla_x \phi))=0, 
 $$
together with  the endpoint conditions 
 $$
  \rho(0, \cdot)=\rho_0, \; \rho(1, \cdot)=\rho_1.
 $$
 The following result shows that the optimal transport problem induced by a Lagrangian cost admits an equivalent Hamiltonian formulation in terms of the density $\rho$ and the 
 Hamilton–Jacobi potential $\phi$.  
 \begin{thm} Let $\rho_0, \rho_1 \in \mathcal P(D)$ be smooth,  strictly positive probability densities. Under the strong convexity of $L(x, \cdot)$, 
  \begin{align}\label{lf}
 C_L(\rho_0, \rho_1)= I_6.
  \end{align}
\end{thm}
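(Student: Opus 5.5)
Two inequalities prove $C_L(\rho_0,\rho_1)=I_6$, and both rest on Theorem \ref{thm1} ($C_L=I_3=I_4=I_5$).

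\textbf{The inequality $I_6\ge C_L$.} Every admissible pair $(\rho,\phi)$ for $I_6$ gives an admissible pair $(\rho,v)$ for the Eulerian problem $I_3$. Take the velocity field $v:=\nabla_p H(x,\nabla_x\phi)$. The constraint in $I_6$ is then exactly the continuity equation (\ref{tr}) with the endpoint conditions (\ref{r01}). The two objectives agree pointwise, because
$$\int_0^1\!\!\int_D L(x,\nabla_pH(x,\nabla_x\phi))\rho\,dx\,dt=\int_0^1\!\!\int_D L(x,v)\rho\,dx\,dt .$$
Hence $I_6\ge I_3=C_L(\rho_0,\rho_1)$ by Theorem \ref{thm1}. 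Strong convexity of $L(x,\cdot)$ is used here only to make $H$ finite and $C^1$, with $\nabla_pH(x,\cdot)=(\nabla_vL(x,\cdot))^{-1}$ a globally Lipschitz bijection. This makes $v$ well defined and at least as regular as $\nabla_x\phi$.

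\textbf{The inequality $I_6\le C_L$.} I would show that the restriction to gradient-type velocities $v=\nabla_pH(x,\nabla_x\phi)$ loses nothing, by exhibiting an (almost) optimal pair of this form. The construction follows Step 4 of Theorem \ref{thm1}.
\begin{itemize}
\item Let $\pi$ be an optimal plan, and let $\phi_0,\phi_1$ be Kantorovich potentials with $\phi_1(y)-\phi_0(x)\le c_L(x,y)$ and equality on $\operatorname{supp}\pi$.
\item Define the Hopf--Lax/Lax--Oleinik interpolant
$$\phi(t,z)=\inf_{x}\{\phi_0(x)+c_L^{0,t}(x,z)\},$$
where $c_L^{0,t}$ is the least action over $[0,t]$. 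It solves $\partial_t\phi+H(x,\nabla_x\phi)=0$ in the viscosity sense, with $\phi(0)=\phi_0$ and $\phi(1)\ge\phi_1$ on the support of $\rho_1$.
\item Let $X(t;x,y)$ be the minimizing curve for $(x,y)\in\operatorname{supp}\pi$. Along it, $\phi$ is differentiable at interior times and satisfies the Hamiltonian relation $\nabla_x\phi(t,X)=\nabla_vL(X,\partial_tX)$, i.e.\ $\partial_tX=\nabla_pH(X,\nabla_x\phi(t,X))$.
\item The displacement interpolation $(\rho,m)$ from Step 4 therefore has $m=\rho\,\nabla_pH(x,\nabla_x\phi)$. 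So $(\rho,\phi)$ is admissible for $I_6$, and its cost equals $\int c_L\,d\pi=C_L$.
\end{itemize}

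An alternative, purely convex route goes through $I_4=I_5$ and Remark (1). At a saddle point, equality in the Fenchel inequality of Step 2 forces $\xi=\nabla_vL(x,m/\rho)$. Equality in the dual constraint gives $\xi=\nabla_x\phi$ wherever $\rho>0$. Hence $m/\rho=\nabla_pH(x,\nabla_x\phi)$, and the same conclusion follows.

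\textbf{Main obstacle.} The difficulty is regularity in the second inequality: $\phi$ is in general only semiconcave/Lipschitz, and differentiable only along the transport rays for $0<t<1$. So the pair $(\rho,\phi)$ fits the "sufficiently smooth" admissible class only in a weak sense. My first attempt would use the fact that, by Mather-type regularity, $\phi(t,\cdot)$ is $C^{1,1}$ on $\operatorname{supp}\rho(t)$ for $t\in(0,1)$. The continuity equation then holds distributionally with a Lipschitz velocity on the support, and $\rho>0$ is used to make the support everything. If that fails, I would approximate instead. I would take smooth strictly positive densities, apply a mollified or entropic regularization of the optimal pair, and use lower semicontinuity of the action to get $I_6\le C_L+\varepsilon$ for every $\varepsilon>0$.
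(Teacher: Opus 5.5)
Your inequality $I_6\ge C_L$ (take $v=\nabla_pH(x,\nabla_x\phi)$ and invoke Theorem~\ref{thm1}) is exactly the paper's first step. The gap is in $I_6\le C_L$: you never show that the pair $(\rho,\phi)$ built from the displacement interpolation and the Hopf--Lax potential is admissible in the class where your first inequality holds. Theorem~\ref{thm1} works with smooth pairs with $\rho>0$ (its Step~1 integrates the flow of a regular velocity field), and so does the paper's proof here. For a general Tonelli Lagrangian, without a Ma--Trudinger--Wang-type condition, the optimal map can be discontinuous even between smooth positive densities. In that case $\phi(t,\cdot)$ is merely semiconcave, and $\nabla_x\phi$ is Lipschitz only on the closed set of calibrated points. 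Nothing in your argument makes $\rho(t)$ smooth or positive for $0<t<1$; positivity of $\rho_0,\rho_1$ does not by itself give $\mathrm{supp}\,\rho(t)=D$. You could instead enlarge $I_6$ to weak pairs. Then your construction is the standard picture of \cite{BB07,FF10} and does work. However, $I_6\ge C_L$ is then no longer covered by Theorem~\ref{thm1} and needs its own argument, e.g.\ the superposition principle. As written, your two halves refer to different admissible classes.

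Your fallbacks do not close this. Mollifying $(\rho,m)$ keeps the continuity equation, but it destroys the structure $m=\rho\nabla_pH(x,\nabla_x\phi)$ that $I_6$ requires. Lower semicontinuity also points the wrong way: it bounds the action of the approximants from below, whereas you need $\limsup\le C_L$, which comes from joint convexity of the perspective and not from lsc. The saddle-point route presupposes smooth primal and dual optimizers, which Remark~(1) explicitly leaves open.

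The missing idea is the paper's per-time projection, which needs no optimizer at all. Take any smooth admissible $(\rho,m)$ with $\rho>0$, and solve at each $t$ the elliptic problem $\nabla_x\cdot(\rho\nabla_pH(x,\nabla_x\phi))=\nabla_x\cdot m$ on the torus; this is where $\rho>0$ and the convexity assumptions enter, through ellipticity. Then $(\rho,\phi)$ satisfies the same continuity equation with the same endpoints. Testing the equation with $\phi$ gives $\int_D(m-\rho\nabla_pH(x,\nabla_x\phi))\cdot\nabla_x\phi\,dx=0$. Fenchel--Young plus the Legendre identity then yield
\[
\int_D\rho\,L(x,m/\rho)\,dx\;\ge\;\int_D\bigl(m\cdot\nabla_x\phi-\rho H(x,\nabla_x\phi)\bigr)\,dx\;=\;\int_D\rho\,L\bigl(x,\nabla_pH(x,\nabla_x\phi)\bigr)\,dx,
\]
so $I_6\le I_4=C_L$ directly. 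This lemma is also exactly what would rescue your regularization fallback: regularize first, then project.
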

\begin{proof}
Introduce the momentum variable 
$$
m=\rho v. 
$$  
Then the Eulerian formulation can be written as
$$
\inf_{\rho, m}J(\rho, m),
$$
where $\rho, m$ are smooth, $\rho>0$, and satisfy 
$$
\partial_t \rho +\nabla \cdot m=0, \quad \rho(0, \cdot)=\rho_0, \quad \rho(1, \cdot)=\rho_1. 
$$
It is immediate that this formulation is no larger than the Hamiltonian formulation, since every admissible pair $(\rho \phi)$ defines an admissible momentum through
$$
m=\rho \nabla_p H(x, \nabla_x \phi). 
$$
To prove the converse inequality, let $(\rho, m)$ be an admissible  pair  in the momentum formulation.  
Since $\rho$ is strictly positive, for each fixed time  $t\in [0, 1]$, we solve the elliptic problem 
  \begin{align}\label{lu1}
 \nabla_x\cdot  (m-\rho \nabla_pH(x, \nabla \phi))=0, \quad x\in  D , 
 \end{align}
 together with the periodic boundary condition $\partial D $. 
 
 The strict convexity of $H(x, \cdot)$, inherited from the convexity of the Lagrangian $L(x, \cdot)$ guarantees ellipticity of this problem. 
 Since $(\rho, m)$ depends smoothly on time, standard elliptic regularity implies that $\phi$ is also smooth in both space and time. 

Multiplying the elliptic equation by $\phi$ and integrating by parts yields
   \begin{align}\label{lu2}
 \int_D (m-\rho \nabla_pH(x, \nabla \phi))\cdot \nabla_x \phi dx=0. 
 \end{align}
 Next, applying the Fenchel–Young inequality,
$$
L(x, v)+H(x, p) \geq p\cdot v,
$$ 
with equality if and only if 
$$
v=\nabla_p H(x, p),
$$
gives 
\begin{align*} 
 \int_{D } L(x, m/\rho)\rho dx & \geq  \int_{D } (m \cdot \nabla_x \phi -  \rho H(x, \nabla \phi) dx \\
&   = \int_{D }  \rho (\nabla_p H(x, \nabla_x \phi) \cdot \nabla_x \phi -  H(x, \nabla_x \phi))dx,  
\end{align*}
where the second equality follows from the orthogonality identity above.

Applying the Legendre transform once again,
$$
\nabla_p H(x, p)\cdot p-H(x, p)=L(x, \nabla_p H(x, p)),
$$
we obtain 
$$
 \int_{D } L(x, m/\rho)\rho dx  \geq \int_D \rho L(x, \nabla_p H(x, \nabla_x \phi))dx. 
$$
Finally, integrating over time shows that every admissible pair $(\rho,m)$ 
produces an admissible pair $(\rho,\phi)$ with no larger action. 
Therefore the two variational formulations have the same infimum, establishing the desired equivalence.
\end{proof}

\section{Examples} 

In this section, we demonstrate that several  well-known example of cost functions.  \\ 

\noindent {\bf Example 1. (Quadratic cost)} \\
The standard quadratic cost 
$$
c(x, y)=|x-y|^2
$$
is recovered by choosing the Lagrangian $L(v)=|v|^2$. \\

\noindent {\bf Example 2 (Mechanical (kinetic +potential) Lagrangian)} \\
A classical example is the action functional from classical mechanics, 
$$
L(x, v)=\frac{1}{2}|v|^2 -V(x),
$$
where $V(x)$ is a potential energy, and the Hamiltonian is 
$$
H(x, p)=\frac{1}{2}|p|^2 +V(x). 
$$
This is perhaps the most classical example beyond the quadratic Wasserstein distance. \\ 

\noindent {\bf Example 3 (Convex translation-invariant cost)}\\
More generally, consider   
$$
c(x, y)=h(x-y),
$$ 
where $h: \mathbb{R}^d \to \mathbb{R}$ is a smooth,   convex function.  This corresponds to a Lagrangian of the form
$$
L(v)=h(v),
$$ 
whose associated Hamiltonian is given by the Legendre transform of $h$.  The minimizing trajectory between $x$ and $y$ is the straight-line path 
$$
\omega(t)=x+t(y-x),
$$ 
and therefore 
$$
c(x, y)=\inf_\omega \int_0^1 h(\dot \omega(t))dt=h(x-y). 
$$
This observation appears, for example, in \cite{Br03, BB07} 
and is also implicit in Villani's treatment \cite{Vi08}. \\

\noindent {\bf Example 4 (Anisotropic quadratic cost)} \\
Let $A(x)$ be a symmetric positive-definite diffusion tensor and consider the quadratic Lagrangian 
$$
L(x, v)=\frac{1}{2}v\cdot A^{-1}(x)v.
$$ 
The corresponding Hamiltonian is 
$$
 H(x, p)=\frac{1}{2} p\cdot A(x)p. 
 $$
 Because the Lagrangian is quadratic in the velocity, the associated action defines a Riemannian-type transport metric on the space of probability densities. This yields a weighted Wasserstein geometry that arises naturally in anisotropic Fokker--Planck equations and related gradient-flow formulations; see \cite{Li09, LZ19}.  \\
 
 %

\noindent {\bf Example 5 (Optimal control induced Lagrangian cost)} \\
Consider a controlled dynamical system   
$$
\dot z(t)=f(z(t), u(t)),
$$
with running cost $l(z, u)$.  Eliminating the control variable leads to the effective Lagrangian 
$$
L(x, v)=\inf_{u: f(z, u)=v} l(z, u),
$$
which represents the minimal cost required to generate velocity $v$ at position $x$. The associated transport cost is then given by
$$
c_L(x, y)=\inf_{z(0)=x, z(1)=y}  \int_0^1 L(z(t), \dot z(t))dt. 
$$
which is the value function of a deterministic optimal control problem and forms the basis of the Hamilton--Jacobi--Bellman theory 
\cite{BC97,Ev10}. When $L$ is a Tonelli Lagrangian, this action also induces an optimal transport cost \cite{BB07,Fa19}. \\

\noindent {\bf Example 6 (Continuous-depth neural networks and  optimal control).}
Continuous-depth neural networks, such as Neural ODEs,  can be viewed as controlled dynamical systems of the form 
\begin{align*}
    \dot z(t)
    =
    f(z(t),\theta(t)),\quad t\in [0, T],
\end{align*}
where $z(t)\in\mathbb{R}^d$ represents the hidden feature state and $\theta(t)$ denotes the time-dependent trainable network parameters. The training process can be viewed as an optimal control problem 
\begin{align*}
    \min_{\theta(\cdot)}
    \;
    \Phi(z(T))
    +
    \int_0^T
    R(\theta(t))\,dt,
\end{align*}
where $\Phi$ is the terminal loss and $R$ is a regularization functional penalizing the complexity or
energy of the parameters.

By eliminating the control variable leads to an effective Lagrangian describing the minimal cost required to generate a prescribed velocity $v$ at a given state $z$:
\begin{align*}
    L(z,v)
    =
    \inf_{\theta:\,f(z,\theta)=v}
    R(\theta).
\end{align*}
Consequently, the induced transition cost between two feature states $x$ and $y$
is given by the action minimization problem 
\begin{align*}
    c_L(x,y)
    =
    \inf_{\substack{z(0)=x\\ z(T)=y}}
    \int_0^T
    L(z(t),\dot z(t))\,dt.
\end{align*}
The terminal loss $\Phi(z(T))$  acts as an endpoint potential, leading to the optimization problem 
$$
 \inf_y \{c_L(x, y)+\Phi(y)\},
 $$ 
where $c(x,y)$ is the Lagrangian transport cost.  This naturally connects Neural ODEs, optimal control, and Lagrangian optimal transport \cite{EHL19, Ch18}. 

These examples illustrate the broad scope of the framework, covering important settings in optimal transport, convex analysis, differential geometry, classical mechanics, optimal control, and deep learning. They show that many seemingly different transport costs can be naturally interpreted within a unified Lagrangian formulation.
\section{Discussions} 

In this work, we studied optimal transport costs induced by a general Lagrangian cost from both the Lagrangian and Eulerian perspectives. We showed that the relationship between these viewpoints is substantially richer than previously recognized. In particular, we established several equivalent characterizations of the induced transport cost, including geometric, Lagrangian, Eulerian, convex optimization, Hamilton-Jacobi dual, and Wasserstein-Hamiltonian formulations, together with explicit correspondences among them  

These equivalent formulations provide a unified variational framework for optimal transport with general Lagrangian costs. They reveal the intrinsic connections among transport maps, minimizing trajectories, Hamiltonian dynamics, convex duality, and variational principles, thereby extending the classical Benamou--Brenier dynamic formulation beyond the quadratic-cost setting \cite{BB00, Vi03, Vi08}. 
Moreover, the dual formulation naturally yields a coupled Hamilton--Jacobi/continuity system, providing a PDE characterization of the optimal transport problem and exposing its underlying Hamiltonian structure on the Wasserstein space \cite{Ot01, AG08}.

This Hamiltonian viewpoint is closely related to several active research directions. Similar PDE systems arise in mean-field games 
\cite{LL07}, optimal control, and interacting particle systems, while Wasserstein--Hamiltonian flows provide a natural infinite-dimensional extension of classical Hamiltonian mechanics \cite{AGS05, CLZ20}.  More recently, related mean-field formulations have also appeared in the mathematical analysis of machine learning, including over-parameterized neural networks, mean-field optimization, and transformer dynamics \cite{MMN18, CB18, RV22, EHL19}.  In these settings, probability measures, rather than individual particles or parameters, become the fundamental dynamical variables.

Beyond its theoretical interest,  the equivalent variational formulations offer multiple perspectives for designing numerical algorithms, including particle methods, PDE-based discretizations, convex optimization approaches, and neural variational solvers.  We hope that these results will provide a useful perspective for further studies of optimal transport with general Lagrangian costs, as well as its connections to geometric mechanics, mean-field dynamics, scientific machine learning, and optimal control.

\bigskip
\subsection*{Acknowledgement}  The author thanks Robert Pego (Carnegie Mellon University) for carefully reading an earlier version and for his insightful comments, which greatly improved the manuscript.

\bibliographystyle{abbrv}

\begin{thebibliography}{10}

 \bibitem{AG08} 
 L. Ambrosio and  W. Gangbo. 
\newblock Hamiltonian {ODE}s in the Wasserstein Space of Probability Measures.
\newblock Communications on Pure and Applied Mathematics,  61(1), 18--53, 2008. 

\bibitem{AGS05} 
L. Ambrosio, N. Gigli, and G. Savare.
\newblock Gradient Flows in Metric Spaces and in the Space of
Probability Measures.
\newblock {\em Lectures in Mathematics}, Birkh\"{A}auser Verlag, Basel, 2005.

\bibitem{Ar17} 
Arjovsky, Martin and Chintala, Soumith and Bottou, L{\'e}on.
\newblock Wasserstein Generative Adversarial Networks.
 \newblock Proceedings of ICML, 214--223, 2017. 

\bibitem{Br91}
Y. Brenier.
\newblock  Polar factorization and monotone rearrangement of vector-valued functions.
\newblock {\em Comm. Pure Appl. Math.}, 44:375--417, 1991.

\bibitem{BB00}
J-D Benamou and Y. Brenier.
\newblock  A computational fluid mechanics solution to the Monge--Kantorovich mass transfer problem.
\newblock {\em Numer. Math.},  84: 375--393, 2000.


\bibitem{BB07}
Patrick Bernard and Boris Buffoni.
\newblock Optimal mass transportation and Mather theory.
\newblock {\em J.  European Math.}, Soc. 9: 85--121,  2007.

\bibitem{BC97} 
Bardi, M. and Capuzzo-Dolcetta, I.
\newblock Optimal Control and Viscosity Solutions of Hamilton–Jacobi–Bellman Equations.
\newblock Birkhäuser, 1997.


\bibitem{Br03}
Y. Brenier.
\newblock  Extended {M}onge-{K}antorovich theory.
\newblock {\em Optimal transportation and applications ({M}artina {F}ranca},
Lecture Notes in Math, Springer, Berlin,  Volume 1813: 91--121, 2003.

\bibitem{CB18}
Lénaïc Chizat and Francis Bach.
\newblock  On the Global Convergence of Gradient Descent for Over-parameterized Models Using Optimal Transport.
\newblock  Advances in Neural Information Processing Systems (NeurIPS 2018).



\bibitem{CFT17} 
Courty, Nicolas and Flamary, R{\'e}mi and Tuia, Devis.
\newblock  Optimal Transport for Domain Adaptation,
\newblock  IEEE Transactions on Pattern Analysis and Machine Intelligence, 39(9),  1853--1867, 2017. 


\bibitem{Cu13}
Marco Cuturi.
\newblock Sinkhorn Distances: Lightspeed Computation of Optimal Transport.
 \newblock Advances in Neural Information Processing Systems, 26, 2292--2300, 2013.
 
\bibitem{Ch18} 
Chen, Ricky T. Q. and Rubanova, Yulia and Bettencourt, Jesse and Duvenaud, David.
\newblock Neural Ordinary Differential Equations. 
\newblock NeurIPS, 2018.


\bibitem{CLZ20} 
S.-N. Chow, W. Li, and H. Zhou.
\newblock  Wasserstein Hamiltonian flows.
\newblock J. Di!erential Equations,  268 (2020), pp. 1205–1219. 

\bibitem{Da08} 
  Dacorogna, Bernard.
  \newblock  Direct Methods in the Calculus of Variations.
 \newblock  Springer, 2nd edition, 2008 


\bibitem{EHL19} 
E. Weinan, Jiequn Han, and Qianxiao Li.
\newblock  A Mean-Field Optimal Control Formulation of Deep Learning.
 \newblock  Research in Mathematical Sciences, 6(10),  2019.


\bibitem{Ev89}
Evans, L.C. 
\newblock Partial differential equations and Monge--Kantorovich mass transfer.
\newblock {\em Lecture Notes}, 1989.

\bibitem{Ev10} 
Evans, L. C.
\newblock Partial Differential Equations.
\newblock  2nd ed., American Mathematical Society, 2010.

\bibitem{Fl18} 
Flamary, R{\'e}mi and Courty, Nicolas. 
\newblock  POT: Python Optimal Transport.
\newblock  Journal of Machine Learning Research, 18(78), 1--8, 2017. 

\bibitem{FF10}
A. Fathi and A. Figalli.
\newblock Optimal Transportation on Non-compact Manifolds.
\newblock {\em Israel Journal of Mathematics},  175(1):1--59,  2010.

\bibitem{FGY11}
A. Figalli, W. Gangbo, and T. Yolcu.
\newblock  A variational method for a class of parabolic PDEs.
\newblock {\em Annali della Scuola Normale Superiore di Pisa-Classe di Scienze-Serie},  10(1): 2011, 207--. 

\bibitem{Fa19} 
Albert Fathi. 
\newblock Weak KAM Theorem in Lagrangian Dynamics.
\newblock Cambridge University Press, 2019. 

  

  
\bibitem{GM96}
 W. Gangbo and R. J. Mccann.  
\newblock  The geometry of optimal transportation.
\newblock {\em Acta Math.}, 177:113--161, 1996.

\bibitem{IL24} 
Ishida, S., Lavenant, H. 
\newblock Quantitative Convergence of a Discretization of Dynamic Optimal Transport Using the Dual Formulation. 
\newblock Found Comput Math 26:  349–384, 2026. 

\bibitem{JL20} 
Jacobs, Matt and L{\'e}ger, Flavien. 
\newblock  A fast approach to optimal transport: The back-and-forth method
\newblock  Numerische Mathematik, 146(3): 513--544, 2020.

\bibitem{JLL21} 
Jacobs, Matt and Lee, Wonjun and L{\'e}ger, Flavien
\newblock  The back-and-forth method for Wasserstein gradient flows. 
\newblock  ESAIM: Control, Optimisation and Calculus of Variations, 27: 28, 2021.


\bibitem{JKO98} 
R. Jordan, D. Kinderlehrer, and F. Otto.
\newblock The variational formulation of the Fokker--Planck
equation.
\newblock {\em SIAM J. Math. Anal.},  29(1), 1--17, 1998.



\bibitem{Ka48}
L.V. Kantorovich. 
\newblock On a problem of Monge. 
\newblock {\em Uspekhi Mat. Nauk.},  3: 225--226, 1948.


\bibitem{KS84}
M. Knott and C.S. Smith. 
\newblock On the optimal mapping of distributions. 
\newblock {\em J. Optim.Theory Appl.},  43(1): 39--49, 1984. 

\bibitem{Li09} 
S. Lisini. 
\newblock  Nonlinear diffusion equations with variable coefficients as gradient flows in Wasserstein spaces. 
\newblock {\em ESAIM Control Optim. Calc. Var.}, 15(3): 2009, 712--740.

\bibitem{LL07} 
Lasry, Jean-Michel; Lions, Pierre-Louis.  \newblock Mean field games. 
\newblock {\em Japanese Journal of Mathematics}, 2 (1): 229–260, 2007.  
 
 
\bibitem{LZ19} 
H.  Liu and A. Tzavaras.  
\newblock  Variational structure of Fokker-Planck equations with a variable matrix mobility. 
\newblock  arXiv:2505.10676, 2025. 

\bibitem{Mc97}
R.J. McCann. 
\newblock A convexity principle for interacting gases. 
\newblock {\em Adv. Math.},  128(1):153--179, 1997. 

\bibitem{MMN18} 
Song Mei, Andrea Montanari, and Phan-Minh Nguyen.
\newblock  A Mean Field View of the Landscape of Two-Layer Neural Networks. 
\newblock  Proceedings of the National Academy of Sciences (PNAS), 115(33): E7665–E7671, 2018.


\bibitem{Ot01} 
F. Otto.
\newblock The geometry of dissipative evolution equations: the porous medium equation.
\newblock {\em Comm. Partial Differential Equations},  26 (1-2), 101--174, 2001.

 \bibitem{PC19} 
Gabriel Peyr{\'e} and Marco Cuturi. 
  \newblock Computational Optimal Transport.
  \newblock Foundations and Trends in Machine Learning, 11(5-6), p355-607, 2019.

 \bibitem{PP19} 
 Pennanen, Teemu and Perkki{\"o}, Ari-Pekka. 
 \newblock Convex duality in nonlinear optimal transport. 
  \newblock ournal of Functional Analysis, 277(4): 1029--1060, 2019. 
  
  
 title   = {Convex duality in nonlinear optimal transport},
  author  = {Pennanen, Teemu and Perkki{\"o}, Ari-Pekka},
  journal = {Journal of Functional Analysis},
  volume  = {277},
  number  = {4},
  pages   = {1029--1060},
  year    = {2019},
  

\bibitem{RR98} 
S.T. Rachev and L. R\"{u}schendorf.  
\newblock Mass transportation problems. Vol. I and II. 
\newblock Probability and its Applications, Springer, New York, 1998.

\bibitem{Ru98} 
Rubner, Yossi and Tomasi, Carlo and Guibas, Leonidas J. 
\newblock The Earth Mover's Distance as a Metric for Image Retrieval.
\newblock International Journal of Computer Vision, 40, 99--121, 1998.
 
\bibitem{RV22} 
Grant M. Rotskoff and Eric Vanden-Eijnden,
\newblock  Neural Networks as Interacting Particle Systems: Asymptotic Convexity of the Loss Landscape and Universal Scaling of the Approximation Error.
\newblock  Communications on Pure and Applied Mathematics, 75(1): 188–248, 2022.



\bibitem{Vi03} 
C. Villani.
\newblock Topics in optimal transportation, Graduate Studies in Mathematics.
 \newblock {\em Vol. 58, American Mathematical Society}, Providence, R.I., 2003.


\bibitem{Vi08} 
C. Villani.
\newblock Optimal Transport: Old and New.
 \newblock {\em  Vol. 338, Springer Science \& Business Media}, 2008. 
 


\end{thebibliography}

\end{document}